\crefname{hypothesis}{Hypothesis}{Hypotheses}
\title{A Proximal Operator for Multispectral Phase Retrieval Problems}
\author{Biel Roig-Solvas\thanks{Department of Electrical and Computer Engineering, Northeastern University, Boston, MA
  (\email{biel@ece.neu.edu}, \email{brooks@ece.neu.edu}).}
\and Lee Makowski\thanks{Department of Bioengineering, Northeastern University, Boston, MA
  (\email{l.makowski@neu.edu}).}
\and Dana H. Brooks\footnotemark[1]}
\begin{document}

\maketitle

% REQUIRED
\begin{abstract}
 Proximal algorithms have gained popularity in recent years in large-scale and distributed optimization problems. One such problem is the phase retrieval problem, for which proximal operators have been proposed recently. The phase retrieval problem commonly refers to the task of recovering a target signal based on the magnitude of linear projections of that signal onto known vectors, usually under the presence of noise. A more general problem is the multispectral phase retrieval problem, where sums of these magnitudes are observed instead. In this paper we study the proximal operator for this problem, which appears in applications like X-ray solution scattering. We show that despite its non-convexity, all local minimizers are global minimizers, guaranteeing the optimality of simple descent techniques. An efficient linear time exact Newton method is proposed based on the structure of the problem's Hessian. Initialization criteria are discussed and the computational performance of the proposed algorithm is compared to that of traditional descent methods. The studied proximal operator can be used in a distributed and parallel scenarios using an ADMM scheme and allows for exploiting the spectral characteristics of the problem's measurement matrices, known in many physical sensing applications, in a way that is not possible with non-splitted optimization algorithms. The dependency of the proximal operator on the rank of these matrices, instead of their dimension, can greatly reduce the memory and computation requirements for problems of moderate to large size ($N>10^4$) when these measurement matrices admit a low-rank representation.
\end{abstract}

% REQUIRED
\begin{keywords}
  Proximal Operators, Phase Retrieval, ADMM
\end{keywords}

% REQUIRED
%\begin{AMS}
  %68Q25, 68R10, 68U05
%\end{AMS}

\section{Introduction}\label{Section:Intro}
Proximal algorithms have gained popularity in recent years due to their ability to solve large-scale and distributed optimization problems efficiently \cite{Parikh2014}. When the problem to be solved can be cast as a collection of additive terms sharing a common variable $x$, one can reformulate it as a \textit{consensus} optimization problem\cite{Parikh2014,Boyd2010}:
\begin{equation}
\begin{split}
    \operatorname*{min}_{x}\; \sum_t^T f_t(x)\quad\equiv\quad \operatorname*{min}_{x}\;  \sum_t^T f_t(x_t)\quad\text{s.t.}\quad x_t = z \quad \forall t
\end{split}
\label{consensus}
\end{equation}
where each additive term depends on its own variable $x_t$ and each of these variables is forced to agree with the consensus variable $z$. This formulation can be iteratively solved in a distributed manner using an Alternate Direction Method of Multipliers (ADMM) \cite{Boyd2010}, where at each iteration a proximal (\textit{prox}) operator of the form \ref{prox} is solved, together with an update on the consensus and additional dual variables.
\begin{equation}
     \textit{prox}_{f}(u) = \operatorname*{min}_{x}\; f(x) + ||x-u||_2^2.
     \label{prox}
\end{equation}
In this work we explore how to solve problems of the type~\ref{prox} stemming from applying ADMM to problems of the form:
\begin{equation}
    \operatorname*{min}_{y\in\mathbb{C}^M}\; \sum_t^T \left(\left(A_t y\right)^H\left(A_t y\right) - b_t\right)^2
    \label{orig_problem}
\end{equation}
for $A_t\in \mathbb{C}^{M\times K_t}$ and $b_t \in R_{+}$ for $t = 1,\dots,T$, which can be seen as an instance of \ref{consensus} where $f_t = \left(\left(A_t y\right)^H\left(A_t y\right) - b_t\right)^2$.\\
\\
Problems like \ref{orig_problem} arise, among other places, in multispectral phase retrieval applications. Phase retrieval commonly refers to the problem of recovering of a complex signal $y \in  \mathbb {C}^M$ from the measurement of the squared projections onto the vectors $a_i\in  \mathbb {C}^{M}$, i.e. find $y$ such that $|<a_i, y>|^2 = b_i$ for all $i = 1,\dots,Q$ \cite{Shechtman2015}. Despite being a classic problem in the signal processing world \cite{Gerchberg1972,Fienup1978,Fienup1982}, the field of phase retrieval has experienced an increase in interest in recent years, thanks to the development of algorithms relying on  semi-definite programming \cite{Candes2011, Candes2013, Waldspurger2013, Fogel2016}, Wirtinger flow \cite{Candes2014, Chen2017} or sparse reconstructions \cite{Li2012,Shechtman2014}.\\
\\
A more general problem than classic phase retrieval arises when the measurements $b_t$ are not single projections, but the sum of several of squared projections, i.e. $b_t = \sum_k^{K_t}\,\left(<a_{t,k},y>\right)^2$. These multispectral phase retrieval problems arise in applications like X-ray solution scattering \cite{Konarev2003} and fiber diffraction \cite{Roig-Solvas2017} , where the contributions of many spectral components sum incoherently in the detector.  In this work we present an efficient algorithm to solve the prox operator of each of the additive terms in \ref{orig_problem}: 
\begin{equation}
     \textit{prox}_{f}(w) = \operatorname*{min}_{y\in\mathbb{C}^M}\; \left(\left(A y\right)^H\left(A y\right) - b\right)^2 + ||y-w||_2^2.
     \label{prox_f}
\end{equation}
Since this problem must be solved in turn for each term in the consensus sum, efficiency becomes of particular importance.

Unlike many of the widely used prox operators \cite{Combettes2011}, \ref{prox_f} can't be solved in closed form except in very special cases (e.g. $K_i = 1$ or $A_i = {I}$ \cite{Soulez2017}) and in general needs to be solved iteratively. To that end, in this work we present an efficient algorithm to globally optimize \ref{prox_f}. We prove that, despite its non-convexity, all minimizers of \ref{prox_f} are equivalent in objective function value and moreover, under mild conditions, \ref{prox_f} has a unique minimizer. We propose a Newton algorithm to reach any of these global minimizers and exploit the structure of \ref{prox_f} to compute each of the Newton iterates in linear time (as opposed to the general $\mathcal{O}(N^3)$ cost), resulting in a very computationally efficient approach to minimize \ref{prox_f}.\\
\\
The structure of the paper is as follows. In Section 2 we recast \ref{prox_f} into a problem over the reals and prove the equivalence of all minimizers and the uniqueness of the minimizer in the case $w>0$. In Section 3 we present the proposed algorithm to solve \ref{prox_f}, together with guidelines to select a good starting point for the iterative algorithm. In Section 4 we report results from numerical simulations comparing the performance of the proposed approach to standard implementations of gradient descent and Newton method and  analyze the sensitivity of the algorithm performance to the spectral and norm properties of $A_i$ and $w$. Finally, in Section 5 we discuss the implications of the proposed method and future work.

\section{Global Optimization of the Prox Operator}\label{Section:Proof}

In this section we prove the existence of a unique minimum for the prox operator presented in \ref{prox_f}. We start by recasting \ref{prox_f} as a problem of the form:
\begin{equation}
    \operatorname*{min}_{x\in R^N}\quad \left( x^Tx - b\right)^2 + ||x-u||_{\Sigma}^2
    \label{P1}
    \tag{P1}
\end{equation}
to which we will refer as \ref{P1} for the rest of this work.
\subsection{Recasting}
We start the recasting by noting that \ref{prox_f} has an equivalent representation over the reals:
\begin{equation}
    \operatorname*{min}_{y\in \mathbb{C}^M}\;\left(\left(A y\right)^H\left(A y\right) - b\right)^2+||y-w||_2^2 \quad \equiv  \operatorname*{min}_{y_R, y_I\,\in R^M}\;\left(\begin{bmatrix}
       y_R\\ 
       y_I
     \end{bmatrix}^T Q \begin{bmatrix}
       y_R\\ 
       y_I
     \end{bmatrix} - b\right)^2+||\begin{bmatrix}
       y_R\\ 
       y_I
     \end{bmatrix} - \begin{bmatrix}
       w_R\\ 
       w_I
     \end{bmatrix}||_2^2 
\end{equation}
where $y_R$ and $y_I$ are the real and imaginary parts of $y$, $w_R$ and $w_I$ are the real and imaginary parts of $w$ and the symmetric matrix $Q$ is of the form:
\begin{equation}
    Q = \begin{bmatrix}
      A_R^T A_R + A_I^T A_I & 0\\ 
       0 & A_R^T A_R + A_I^T A_I 
     \end{bmatrix}
     \label{matrixQ}
\end{equation}
for $A_R = \mathbf{Re}\left(A\right)$ and $A_I = \mathbf{Im}\left(A\right)$. Taking the singular value decomposition (SVD) of $Q$: $Q = U_Q \Sigma_Q U_Q^T$ and writing:
\begin{equation}
    \tilde{y} = U_Q^T\begin{bmatrix}
       y_R\\ 
       y_I
     \end{bmatrix} \quad \tilde{w} = U_Q^T\begin{bmatrix}
       w_R\\ 
       w_I
     \end{bmatrix}
\end{equation}
we get the optimization problem:
\begin{equation}
    \operatorname*{min}_{y_R, y_I\,\in R^M}\;\left(\begin{bmatrix}
       y_R\\ 
       y_I
     \end{bmatrix}^T Q \begin{bmatrix}
       y_R\\ 
       y_I
     \end{bmatrix} - d\right)^2+||\begin{bmatrix}
       y_R\\ 
       y_I
     \end{bmatrix} - \begin{bmatrix}
       w_R\\ 
       w_I
     \end{bmatrix}||_2^2 \quad \equiv  \operatorname*{min}_{\tilde{y}}\quad \left( \tilde{y}^T\Sigma_Q \tilde{y} - d\right) + ||\tilde{y}-\tilde{w}||_2^2
\end{equation}
We assume in the following that $\Sigma_Q$ is full rank. If that were not the case, the components of $\tilde{x}$ on the nullspace of $\Sigma_Q$ can be trivially set to $\tilde{w}$ for the optimum solution. Setting $N = 2M$ and denoting $x \in R^N = S\Sigma_Q^{1/2}\tilde{y}$, $u \in R^N= S\Sigma_Q^{1/2}\tilde{w}$ and $\Sigma = \Sigma_Q^{-1}$, where $S = \text{diag}\left(\text{sign}\left(\Sigma_Q^{1/2}\tilde{w}\right)\right)$ we get the final form:
\begin{equation}
    \operatorname*{min}_{\tilde{y}}\quad \left( \tilde{y}^T\Sigma_Q \tilde{y} - b\right) + ||\tilde{y}-\tilde{w}||_2^2 \quad \equiv \quad  \operatorname*{min}_{x\in R^N}\quad \left( x^Tx - b\right)^2 + ||x-u||_{\Sigma}^2
\end{equation}
with $\Sigma \succ 0$ and $u\geq 0$.

\subsection{Proof of Global Optimality}
Starting from \ref{P1}, the structure of the proof is as follows: first we present an equivalent problem to \ref{P1} over $R^{2\,N}$ (which we will refer to as \ref{P2}) with a convex objective function and $N$ non-convex quadratic equality constraints. These equality constraints are then relaxed to convex inequality constraints, resulting in the convex relaxation \ref{P2C} of the problem \ref{P2}. In Lemma \ref{LemmaRelax} we show that the proposed convex relaxation is exact, i.e. optimizing \ref{P2C} is equivalent to minimizing \ref{P2}, and thus \ref{P1}. \\
\\
In Lemmas \ref{Lemma1} and \ref{Lemma2} we discuss the locations of the minimizers of \ref{P1}. More specifically, in Lemma \ref{Lemma1} we show that if $w>0$, then all minimizers are in the non-negative orthant, while in Lemma \ref{Lemma2} we tackle the case when $w$ is non-negative but not positive, where minimizers can exist outside the non-negative orthant. We show that each of them will have the same objective value as another minimizer in the non-negative orthant. In Lemma \ref{Lemma3} we use the KKT conditions for \ref{P2C} to show that any minimizer in the non-negative orthant of \ref{P1} has a one to one correspondence with a local minimizer of \ref{P2C} with the same objective value.\\
\\
Finally, in Theorem \ref{Thm1} we show that all minimizers of \ref{P1} are equivalent in objective value and thus all minimizer are global. To do so, we use the convexity of \ref{P2C} and Lemma \ref{Lemma3} to show that all minimizers in the non-negative orthant of \ref{P1} share the same objective value and using Lemmas \ref{Lemma1} and \ref{Lemma2} we extend this proposition to all minimizers of \ref{P1}. Corollary \ref{Corollary1} provides a stronger statement in the case of $u>0$, where we show that in that case the minimizer of \ref{P1} is unique.\\
\\
We start the proof by stating the optimization problems \ref{P2} and \ref{P2C}. First we unravel all the terms in \ref{P1}:
\begin{equation}
    \operatorname*{min}_{x\in R^N}\quad \left( x^Tx\right)^2 + b^2 - 2 b\left( x^Tx\right) + x^T \Sigma x + u^T \Sigma u -2 u^T \Sigma x 
\end{equation}
Next we add the redundant variable $z \in R^N$ such that $z_i = \left(x_i\right)^2$. Under this constraint, we have that $x^T x = \mathbf{1}^T z$ and $x^T \Sigma x = \sigma^T z$, where $\sigma$ is the vector of diagonal values of the matrix $\Sigma$. Applying these changes and dropping the terms that are constant with respect to $x$ ans $z$, we get \ref{P2}:
\begin{equation}
\begin{split}
    \operatorname*{min}_{x,z\in R^N}\quad & z^T \left(\mathbf{1} \mathbf{1}^T\right) z + \left( \sigma - 2 b \mathbf{1}\right)^T z - 2 u^T \Sigma x \\
    & \text{s.t.}\quad z_i = \left(x_i\right)^2\quad \forall i
    \end{split}
    \label{P2}
    \tag{P2}
\end{equation}
We note that this objective is quadratic in $z$ and linear in $x$ and thus convex. The convex relaxation \ref{P2C} is given by replacing the equalities $z_i = \left(x_i\right)^2$ by convex inequalities $z_i \geq \left(x_i\right)^2$, resulting in the relaxed problem:
\begin{equation}
\begin{split}
    \operatorname*{min}_{x,z\in R^N}\quad & z^T \left(\mathbf{1} \mathbf{1}^T\right) z + \left( \sigma - 2 b \mathbf{1}\right)^T x - 2 u^T \Sigma x \\
    & \text{s.t.}\quad z_i \geq \left(x_i\right)^2\quad \forall i
    \end{split}
    \label{P2C}
    \tag{P2C}
\end{equation}
The positive-definiteness of $\Sigma$ and the non-negativity of $u$ guarantee that this relaxation is exact, as shown in the following Lemma:
\begin{lemma}
\ref{P2C} has a minimizer that satisfies the constraints $z_i \geq \left(x_i\right)^2\; \forall i$ with equality. If $u>0$, this minimizer is unique.  
\label{LemmaRelax} 
\end{lemma}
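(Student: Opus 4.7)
The plan is to invoke the KKT conditions for \ref{P2C}, which are necessary and sufficient since \ref{P2C} is convex and Slater's condition holds trivially (take any $x$ with $z_i > (x_i)^2$). Writing the Lagrangian with multipliers $\lambda_i \geq 0$ for each constraint $(x_i)^2 \leq z_i$, stationarity in $x$ and $z$ gives
\[
\lambda_i x_i = \sigma_i u_i, \qquad \lambda_i = 2\,\mathbf{1}^T z + \sigma_i - 2b,
\]
where the first identity uses that $\Sigma$ is diagonal with strictly positive diagonal entries $\sigma_i$ (inherited from $\Sigma \succ 0$).

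The structural fact driving both claims is that slack at a constraint forces the corresponding component of $u$ to vanish: if $z_i > (x_i)^2$ at an optimum, complementary slackness gives $\lambda_i = 0$, and then $\lambda_i x_i = \sigma_i u_i$ combined with $\sigma_i > 0$ forces $u_i = 0$. But when $u_i = 0$, the only $x_i$-dependent term of the objective, namely $-2\sigma_i u_i x_i$, disappears, so any modification of $x_i$ preserves optimality. Given a minimizer $(x^*, z^*)$, at every slack index $i$ I can therefore redefine $x_i^* \leftarrow \sqrt{z_i^*}$ without changing the objective value, producing a minimizer with every inequality active. This handles existence.

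For uniqueness under $u>0$, the same stationarity now reads $\lambda_i x_i = \sigma_i u_i > 0$, which forces $\lambda_i > 0$ at every minimizer; complementary slackness then forces every constraint to be active, i.e.\ $z_i = (x_i)^2$, at every minimizer. Given two minimizers $(x^{(1)}, z^{(1)})$ and $(x^{(2)}, z^{(2)})$, convexity of \ref{P2C} guarantees that the whole segment joining them consists of minimizers, each of which must also be all-active. Imposing activity at a convex combination $(1-\alpha)(x^{(1)},z^{(1)})+\alpha(x^{(2)},z^{(2)})$ yields $(1-\alpha)(x^{(1)}_i)^2 + \alpha(x^{(2)}_i)^2 = ((1-\alpha)x^{(1)}_i + \alpha x^{(2)}_i)^2$, which collapses to $\alpha(1-\alpha)(x^{(1)}_i - x^{(2)}_i)^2 = 0$ and hence $x^{(1)} = x^{(2)}$ coordinatewise (and then $z^{(1)}=z^{(2)}$).

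The only delicate step is the structural implication ``slack at index $i$ $\Rightarrow u_i = 0$''; once this is in hand, existence follows by a direct replacement and uniqueness reduces to the strict convexity of $t \mapsto t^2$ applied to the coordinates of a convex combination of optimal $x$'s.
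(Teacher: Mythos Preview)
Your proof is correct, and in fact more complete than the paper's on the uniqueness claim. Both arguments rest on the same structural observation---that slack at index $i$ forces $u_i=0$---but reach it differently. The paper argues directly from the fact that the objective is linear in $x$ with nonpositive coefficient $-2\Sigma u$: increasing $x_i$ can never hurt, and strictly helps whenever $u_i>0$, so at any optimum $x_i$ sits at its upper bound $\sqrt{z_i}$. You instead route through KKT, extracting $\lambda_i x_i = \sigma_i u_i$ from stationarity and then reading off the same conclusion via complementary slackness. The paper's approach is more elementary (no multipliers, no constraint qualification to check); yours is more mechanical and generalizes more readily.

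Where your argument genuinely adds value is uniqueness: the paper's proof only establishes that every constraint is active at every optimum when $u>0$, which by itself does not rule out multiple all-active optima (uniqueness of the P1 minimizer is deferred to Corollary~\ref{Corollary1}). Your convex-combination step---forcing $(1-\alpha)(x_i^{(1)})^2+\alpha(x_i^{(2)})^2 = ((1-\alpha)x_i^{(1)}+\alpha x_i^{(2)})^2$ along the segment of optima and invoking strict convexity of $t\mapsto t^2$---closes that gap cleanly and self-containedly.
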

 
\begin{proof}
To prove this it suffices to show that given the linear relationship between $x$ and $-2 u^T \Sigma$ and the non-positivity of $-2 u^T \Sigma$ by construction, increasing $x$ can never increase the objective function. For those indices $i$ for which $u_i>0$, increasing $x_i$ will always decrease the objective and thus $x_i^*$ reaches its upper bound, resulting in $z_i^* = (x_i^*)^2$. For the rest of the indices, for which $u_i = 0$, $x_i$ plays no role in \ref{P2C} and we only require it to be feasible, leading to $x_i^* \in [-\sqrt{z_i^*}, \sqrt{z_i^*}]$. As a result, the pair $[x^*,z^*]$ with $z_i^* = (x_i^*)^2$ is a minimizer of \ref{P2C}.
\end{proof}

Next we discuss the location of the minimizers of \ref{P1} in $R^N$. We start with the case $u>0$:

% New Lemmas.
\begin{lemma}
If $u>0$, all minimizers of \ref{P1} are non-negative.
\label{Lemma1} 
\end{lemma}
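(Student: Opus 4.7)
The plan is to argue by a component-wise reflection. Since the recasting makes $\Sigma$ diagonal with strictly positive entries $\sigma_i>0$ and the objective function of \ref{P1} depends on $x$ only through the scalar $x^Tx$ in the quartic part, reflecting any single coordinate $x_i \mapsto -x_i$ preserves the quartic term $(x^Tx - b)^2$ exactly. The idea is therefore to show that whenever some coordinate of a candidate minimizer is negative, flipping its sign strictly decreases the quadratic penalty $\|x-u\|_\Sigma^2$, contradicting minimality.

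The key computation is the following. Suppose $x^*$ is a minimizer of \ref{P1} and suppose, for contradiction, that $x_i^* < 0$ for some index $i$. Let $\tilde x$ agree with $x^*$ in every coordinate except the $i$-th, where $\tilde x_i = -x_i^* > 0$. Because $\Sigma$ is diagonal, we have
\begin{equation*}
\|\tilde x - u\|_\Sigma^2 - \|x^* - u\|_\Sigma^2 \;=\; \sigma_i\bigl[(-x_i^* - u_i)^2 - (x_i^* - u_i)^2\bigr] \;=\; 4\,\sigma_i\, x_i^*\, u_i.
\end{equation*}
Under the hypothesis $u>0$ (so $u_i>0$) and the assumption $x_i^*<0$, this quantity is strictly negative, so $\tilde x$ achieves a strictly smaller value of the objective than $x^*$, contradicting minimality. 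Hence no coordinate of a minimizer can be negative, proving the claim.

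The argument is essentially one line once the reduction to diagonal $\Sigma$ is in place, so I do not expect any serious technical obstacle. The only thing to be careful about is making sure the reader recognises that the recasting in the previous subsection leaves $\Sigma$ diagonal with positive diagonal and $u$ non-negative, so that the coordinate-wise reflection is a legitimate move and the hypothesis $u>0$ has a clean meaning. No appeal to the convex relaxation \ref{P2C}, KKT conditions, or the structure of critical points is needed for this lemma; the subsequent lemmas will do the heavier lifting.
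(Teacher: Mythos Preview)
Your reflection argument is clean, but it proves a weaker statement than the paper needs. Flipping the sign of $x_i^*$ produces a point $\tilde x$ at distance $2|x_i^*|$ from $x^*$ with strictly smaller objective; this shows $x^*$ cannot be a \emph{global} minimizer, but it does not rule out $x^*$ being a \emph{local} minimizer, since $\tilde x$ need not lie in any small neighborhood of $x^*$. In this paper ``minimizer'' means local minimizer throughout: the whole purpose of Lemma~\ref{Lemma1} (with Lemmas~\ref{Lemma2} and~\ref{Lemma3}) is to feed Theorem~\ref{Thm1}, which asserts that every local minimizer is global. If Lemma~\ref{Lemma1} is only known for global minimizers, then in the case $u>0$ Theorem~\ref{Thm1} becomes circular---you have no tool to handle a putative local minimizer with a negative coordinate, and Lemma~\ref{Lemma2} does not apply because its proof requires $u_k=0$ at the offending index.

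The paper's proof instead uses the first- and second-order necessary conditions. Stationarity at $x^*$ with $x_k^*<0$ and $u_k>0$ forces $2(x^{*T}x^*-b)+\sigma_k<0$, so the diagonal part $D=4(x^{*T}x^*-b)I+2\Sigma$ of the Hessian has a negative entry. The ingredient your argument never touches is the spectral structure inherited from the recasting: the block form of $Q$ in \eqref{matrixQ} forces every eigenvalue of $\Sigma$, hence of $D$, to have multiplicity at least two, so $d_1=d_2<0$. Eigenvalue interlacing for the rank-one update $D+8xx^T$ then gives $\lambda_1\le d_2<0$, contradicting $\nabla^2 f_{P_1}(x^*)\succeq 0$. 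Without the multiplicity-two fact a single negative diagonal entry could be absorbed by the rank-one perturbation, so the local statement genuinely relies on this structural property and cannot be obtained from your symmetry argument alone.
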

\begin{proof}
Assume by contradiction that there exists an $x$ with an index $k$ such that $x_k < 0$ and $x$ locally minimizes \ref{P1}, i.e. $\nabla f_{P_{1}}(x) = 0$ and $\nabla^2 f_{P_{1}}(x) \succeq 0 $. The stationarity condition equates to:
\begin{equation}
    \left(4({x}^T {x} - b)\; I + 2\Sigma\right){x^*} = 2\Sigma u \quad \iff \quad \left(2({x}^T {x} - b) + \sigma_i\right)x_i = \sigma_i u_i \quad \forall i
    \label{stationarity}
\end{equation}
Given that $x_k < 0$ and $\sigma_k u_k >0$ by assumption, we have that $\left(2({x}^T {x} - b) + \sigma_k\right)<0$. Next we analyze the Hessian of \ref{P1} at $x$. This Hessian presents a diagonal plus rank 1 structure given by:
\begin{equation}
    \nabla^2 f_{P_{1}}(x) =  \left(4({x}^T {x} - b)\; I + 2\Sigma\right) + 8 x x^T
\end{equation}
Writing the diagonal component as $D = \left(4({x}^T {x} - b)\; I + 2\Sigma\right)$ and its eigenvalues as $d_i$ with $d_i\leq d_{i+1}$, and the eigenvalues of $\nabla^2 f_{P_{1}}(x)$ as $\lambda$ with $\lambda_i \leq \lambda_{i+1}$, we have that \cite{Golub1973}:
\begin{equation}
    d_i \leq \lambda_i \leq d_{i+1}.
    \label{Interlacing}
\end{equation}
We remind the reader that the diagonal entries of $\Sigma$ are the inverse of the eigenvalues of $Q$. Given the block structure of $Q$ in Equation \ref{matrixQ}, one can see that all eigenvalues of $Q$ will have a multiplicity of at least 2. It follows that the eigenvalues of $\Sigma$, and thus of $D$, will also have multiplicity 2 or greater. Combining this with the fact that the lowest eigenvalue of $D$ is negative by assumption (as there exists an index $k$ for which $\left(2({x}^T {x} - b) + \sigma_k\right)<0$, i.e. $D$ is diagonal and one of its entries is negative), we have that at least 2 eigenvalues of $D$ will be negative, i.e $d_1 = d_2 < 0$.\\
\\
By \ref{Interlacing}, the smallest eigenvalue of $\nabla^2 f_{P_{1}}(x)$ will be bounded above by $d_2$, resulting in $\lambda_1 <0$. It follows then that $\nabla^2 f_{P_{1}}(x) \not \succeq 0 $, which contradicts the assumption that $x$ is a local optimum of \ref{P1}. All stationary points with $x_k < 0$ must then be either saddle points or local maximizers, as the Hessian at those points will be either indefinite or negative semi-definite. As a result, any minimizer of \ref{P1} must have $x\geq 0$.
\end{proof}
For the case when $u$ has some entries equal to 0, we get the weaker proposition:
\begin{lemma}
If $u\geq0$ and $u\not> 0$, then any local minimizer $x$ with an index $k$ such that $x_k<0$ has an equivalent minimizer $\hat{x}: f_{P_{1}}(x) = f_{P_{1}}(\hat{x})$ in the non-negative orthant of the form $\hat{x} = |x|$
\label{Lemma2} 
\end{lemma}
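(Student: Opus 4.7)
The plan is to adapt the Hessian argument from Lemma~\ref{Lemma1} to extract a stronger componentwise conclusion, and then exploit a symmetry of the objective. First I will show that at any local minimizer $x$ of \ref{P1}, the condition $x_k<0$ forces $u_k=0$ (not just $u\not>0$). Suppose for contradiction that $x_k<0$ and $u_k>0$. The stationarity condition \eqref{stationarity} gives $(2(x^Tx-b)+\sigma_k)x_k=\sigma_k u_k$, with the right-hand side strictly positive and $x_k<0$, hence $2(x^Tx-b)+\sigma_k<0$. Now the diagonal part of the Hessian, $D=4(x^Tx-b)I+2\Sigma$, has $d_k=2(2(x^Tx-b)+\sigma_k)<0$. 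Because of the block structure of $Q$ in \eqref{matrixQ}, every eigenvalue of $\Sigma$ has multiplicity at least $2$, so the index sharing $\sigma_k$'s value also yields a negative diagonal entry, giving $D$ at least two negative eigenvalues. The interlacing inequality \eqref{Interlacing} then yields $\lambda_1\leq d_2<0$, contradicting $\nabla^2 f_{P_1}(x)\succeq 0$. So at any local minimizer, $u_k=0$ whenever $x_k<0$.

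Second, I will use the separable symmetry of $f_{P_1}$. Writing out the objective,
\begin{equation*}
f_{P_1}(x)=\bigl(x^Tx-b\bigr)^2+\sum_i \sigma_i (x_i-u_i)^2,
\end{equation*}
one sees that each coordinate $x_i$ enters the first term only through $x_i^2$, while in the second term it contributes $\sigma_i x_i^2-2\sigma_i u_i x_i+\sigma_i u_i^2$. When $u_i=0$, this contribution reduces to $\sigma_i x_i^2$, so $f_{P_1}$ is invariant under the coordinate flip $x_i\mapsto -x_i$. Applying the flip at every index where $x$ is negative produces $\hat{x}=|x|$. By the first step, $u_i=0$ at every such index, so $f_{P_1}(\hat{x})=f_{P_1}(x)$; moreover, because the flips are symmetries of $f_{P_1}$, they map local minimizers to local minimizers, and $\hat{x}$ lies in the non-negative orthant by construction.

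The main obstacle here is recognizing that the Hessian argument in Lemma~\ref{Lemma1} can be sharpened to a per-coordinate statement: the conclusion $u_k=0$ only needs the single coordinate $x_k$ to be negative (together with the doubled multiplicity from the block structure of $Q$), and not the global assumption $u>0$ used before. Once that observation is in hand, the reflection symmetry of $f_{P_1}$ under $x_i\mapsto -x_i$ at indices with $u_i=0$ makes the rest immediate, and no additional optimization machinery is required.
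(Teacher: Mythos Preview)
Your proof is correct and follows essentially the same approach as the paper: first use the stationarity condition together with the Hessian/interlacing argument from Lemma~\ref{Lemma1} to force $u_k=0$ at every index with $x_k<0$, and then invoke the sign-flip symmetry of $f_{P_1}$ at those coordinates to conclude that $\hat{x}=|x|$ has the same objective value and is also a local minimizer. The paper's version is terser, but the ideas are identical.
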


\begin{proof}
As in Lemma 1, $x$ must satisfy $\nabla f_{P_{1}}(x) = 0$ and $\nabla^2 f_{P_{1}}(x) \succeq 0 $ to be a local minimizer of \ref{P1}. For each $k$ for which $x_k<0$, we must have that $u_k = 0$. Otherwise, to satisfy the stationarity condition, one would have that $\left(2({x}^T {x} - b) + \sigma_k\right)<0$ and, as shown in Lemma 1, that implies the existence of negative eigenvalues of the Hessian, contradicting the positive-semidefiniteness assumption.\\
\\
Since $u_k = 0$ for each $x_k < 0$, it is easy to verify that $f_{P_{1}}(x) = f_{P_{1}}(\hat{x})$, as the function becomes invariant to changing the sign of $x_k$, and that the stationarity condition and the semidefinitieness of the Hessian are also trivially satisfied, concluding the proof.
\end{proof}
Next we tie the minimizers of \ref{P1} with those of \ref{P2C}. We do so in the following Lemma:
\begin{lemma}
If $x\geq 0$ is a minimizer of \ref{P1}, then the pair $[x,z]$ with $z_i = x_i^2$ is a minimizer of \ref{P2C}.
\label{Lemma3} 
\end{lemma}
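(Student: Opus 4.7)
The plan is to invoke the KKT conditions for \ref{P2C}. Since \ref{P2C} has a convex objective (PSD quadratic in $z$, linear in $x$) and convex inequality constraints $z_i \geq x_i^2$, any feasible KKT point is automatically a global minimizer. The task therefore reduces to exhibiting Lagrange multipliers $\mu$ such that $(x, z, \mu)$ with $z_i = x_i^2$ satisfies the full KKT system.

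First, I would write the Lagrangian
\begin{equation*}
L(x,z,\mu) = z^T(\mathbf{1}\mathbf{1}^T)z + (\sigma - 2b\mathbf{1})^T z - 2 u^T \Sigma x + \sum_{i=1}^N \mu_i \bigl(x_i^2 - z_i\bigr),
\end{equation*}
and propose the multipliers $\mu_i := 2(x^T x - b) + \sigma_i$. Stationarity in $z$ gives $2(\mathbf{1}^T z)\mathbf{1} + \sigma - 2b\mathbf{1} = \mu$, which holds by construction since $\mathbf{1}^T z = x^T x$. Stationarity in $x$ reads $\mu_i x_i = \sigma_i u_i$, and this is precisely the componentwise form of the first-order condition $\nabla f_{P_1}(x) = 0$ displayed in \ref{stationarity}. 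Primal feasibility ($z_i \geq x_i^2$) and complementary slackness ($\mu_i(x_i^2 - z_i) = 0$) are trivial because $z_i = x_i^2$ by construction.

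The one remaining and non-trivial obligation is dual feasibility, $\mu_i \geq 0$ for all $i$. This is where I would reuse the eigenvalue argument from Lemma \ref{Lemma1}. Suppose for contradiction that $\mu_k = 2(x^T x - b) + \sigma_k < 0$ for some $k$. Because the diagonal entries of $\Sigma$ inherit multiplicity at least $2$ from the block structure of $Q$ in \ref{matrixQ}, there exists $k' \neq k$ with $\sigma_{k'} = \sigma_k$, and hence the diagonal matrix $D = 4(x^T x - b) I + 2\Sigma$ has at least two strictly negative eigenvalues, $d_1 = d_2 < 0$. Applying the interlacing inequality \ref{Interlacing} to the rank-one update $\nabla^2 f_{P_1}(x) = D + 8 x x^T$ yields $\lambda_1 \leq d_2 < 0$, contradicting the second-order condition $\nabla^2 f_{P_1}(x) \succeq 0$ that must hold at the minimizer $x$ of \ref{P1}. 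Therefore $\mu \geq 0$.

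With all four KKT conditions verified, convexity of \ref{P2C} upgrades the KKT certificate to global optimality of $[x, z]$, completing the proof. The only real obstacle is the dual feasibility step; everything else is bookkeeping between the gradient of \ref{P1} and the stationarity conditions of the Lagrangian, and that step is already packaged for us by the interlacing and multiplicity reasoning of Lemma \ref{Lemma1}.
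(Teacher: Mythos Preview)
Your proposal is correct and follows essentially the same route as the paper: set up the KKT system for \ref{P2C}, identify the multipliers $\mu_i = 2(x^Tx-b)+\sigma_i$ from stationarity, and obtain dual feasibility $\mu_i\ge 0$ by invoking the multiplicity-$2$ / interlacing argument of Lemma~\ref{Lemma1} against the second-order condition $\nabla^2 f_{P_1}(x)\succeq 0$. The only cosmetic difference is that you propose $\mu$ upfront and then verify, whereas the paper derives $\mu$ from the $x$-stationarity equation before checking the $z$-stationarity; the content is identical.
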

\begin{proof}
The pair $[x,z]$ must satisfy the KKT conditions of \ref{P2C} to minimize it. Writing the constraints in the standard form $g_i(x,z) \leq 0$ for $g_i(x,z) = x_i^2-z_i$, the KKT conditions for this problem are:
\begin{equation}
\begin{split}
 \nabla_{x} f_{P_{2C}}(x^*,z^*) = - \sum \mu_i^* \nabla_{x}g_i(x^*,z^*)\\ 
 \nabla_{z} f_{P_{2C}}(x^*,z^*) = - \sum \mu_i^* \nabla_{z}g_i(x^*,z^*)\\
\end{split}
\label{KKT-Stat}\tag{Stationarity}
\end{equation}
\begin{equation}
    g_i(x^*,z^*)\leq 0
    \label{KKT-PrimFeas}\tag{Primal Feasibility}
\end{equation}
\begin{equation}
    \mu_i^*\geq 0
    \label{KKT-DualFeas}\tag{Dual Feasibility}
\end{equation}
\begin{equation}
  \mu_i^*\,g_i(x^*,z^*) = 0   
    \label{KKT-ComplSlack}\tag{Comp. Slackness}
\end{equation}
Primal feasibility and complementary slackness are satisfied by assumption, as the equality $z_i = x_i^2$ is equivalent to stating $g_i(x,z) = 0$ for all $i$. Developing the stationarity condition for $y$ yields:
\begin{equation}
    \begin{split}
        \nabla_{x} f_{P_{2C}}(x^*,z^*) &= - \sum \mu_i^* \nabla_{x}g_i(x^*,z^*)\\ 
            -2\Sigma u &=  -2\, \text{diag}(\mu^*) x^*\\
            \Sigma u &= \text{diag}(x^*) \mu^*
    \end{split}
\end{equation}
As $x$ is a minimizer of \ref{P1}, it must satisfy the stationarity condition \ref{stationarity} for \ref{P1}. Combining both stationarity conditions for \ref{P1} and \ref{P2C} results in $\mu_i = \left(2({x}^T {x} - b) + \sigma_i\right)$. The assumption $\nabla^2 f_{P_{1}}(x) \succeq 0 $ necessarily implies that $\left(2({x}^T {x} - b) + \sigma_i\right)\geq 0$, as in Lemma \ref{Lemma1}, so we are guaranteed that the choice $\mu_i = \left(2({x}^T {x} - b) + \sigma_i\right)$ satisfies both the stationarity condition for $x$ and the dual feasibility condition in \ref{P2C}. The only condition left to check is the stationarity with respect to $z$. Developing the stationarity condition for $z$ and using the above $\mu$ identity we get:
\begin{equation}
    \begin{split}
         \nabla_{z} f_{P_{2C}}(x^*,z^*) &= - \sum \mu_i^* \nabla_{z}g_i(x^*,z^*)\\
         2\left(\mathbf{1}\mathbf{1}^T\right) z^* + \left(\sigma - 2 b \mathbf{1}\right) &= \mu^* \\
         \text{diag}(x^*)\; (2\left(\mathbf{1}\mathbf{1}^T\right) z^* + \left(\sigma - 2 b \mathbf{1}\right)) &= \text{diag}(x^*)\;\mu^* \\
         \text{diag}( 2\left(\mathbf{1}^T z^* - b \right) \mathbf{1} + \sigma) x^* &= \Sigma u\\
         \left(2({x^*}^T {x^*} - b)\; I + \Sigma\right){x^*} &= \Sigma u 
    \end{split}
\end{equation}
The last equation is equivalent to requiring $y$ to be a stationarity point of \ref{P1}, which it is by assumption, so we can conclude that the pair $[x,z]$ satisfies all the KKT conditions of \ref{P2C} and is thus a minimizer of \ref{P2C}, which finishes the proof.
\end{proof}
Combining Lemmas \ref{Lemma1}, \ref{Lemma2} and     \ref{Lemma3} and the convexity of \ref{P2C}, we postulate that:
\begin{theorem}
Every local minimizer of \ref{P1} is a global minimizer.  
\label{Thm1} 
\end{theorem}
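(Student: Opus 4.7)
The plan is to chain the established lemmas: every local minimizer of \ref{P1} is first pushed into the non-negative orthant (without changing its objective value), then lifted to a minimizer of the convex program \ref{P2C}, whose value is necessarily the global minimum.

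First, I would fix an arbitrary local minimizer $x^*$ of \ref{P1} and reduce to the non-negative orthant. If $u>0$, Lemma \ref{Lemma1} gives $x^*\geq 0$ directly. If $u\geq 0$ but $u\not> 0$, Lemma \ref{Lemma2} supplies the companion point $\hat{x}=|x^*|\geq 0$ with $f_{P_1}(\hat{x})=f_{P_1}(x^*)$, and $\hat{x}$ is itself a local minimizer of \ref{P1}. So in either case we obtain a non-negative local minimizer with the same objective value as $x^*$; abusing notation I will still call it $x^*$.

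Next, I would apply Lemma \ref{Lemma3} to lift $x^*$ to the pair $[x^*,z^*]$ with $z_i^*=(x_i^*)^2$, which is a minimizer of \ref{P2C}. Because $z_i^*=(x_i^*)^2$, the objective of \ref{P2C} evaluated at $[x^*,z^*]$ equals, up to the constant terms dropped in the passage from \ref{P1} to \ref{P2}, the value $f_{P_1}(x^*)$. Since \ref{P2C} is a convex program (convex quadratic objective on a convex feasible set), every local minimizer of \ref{P2C} is a global minimizer and all minimizers share a common objective value $v^\star$. Hence $f_{P_1}(x^*)=v^\star+c$, where $c$ is the additive constant relating the two objectives, and this number is independent of the choice of local minimizer $x^*$. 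It follows that every local minimizer of \ref{P1} attains the same objective value, and therefore each one is a global minimizer.

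There is essentially no additional calculation beyond invoking the prior lemmas; the only subtlety, and the step where the argument could go wrong, is making sure the reduction from a sign-indefinite minimizer to a non-negative one preserves both the local minimality and the objective value, which is exactly what Lemmas \ref{Lemma1} and \ref{Lemma2} are arranged to do. For the corollary strengthening the statement when $u>0$, I would additionally use the uniqueness clause of Lemma \ref{LemmaRelax}: the minimizer of \ref{P2C} is unique in this case, so the lifted pair $[x^*,z^*]$ is unique, and since $x^*\geq 0$ by Lemma \ref{Lemma1} it is determined by $x_i^*=\sqrt{z_i^*}$, giving a unique minimizer of \ref{P1}.
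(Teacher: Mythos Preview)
Your argument for Theorem~\ref{Thm1} is correct and follows the same chain as the paper's proof: reduce to a non-negative local minimizer via Lemmas~\ref{Lemma1} and~\ref{Lemma2}, lift to \ref{P2C} via Lemma~\ref{Lemma3}, and use convexity of \ref{P2C} to conclude that all such minimizers share a common objective value. The paper's write-up is slightly more compressed (it does not separately invoke Lemma~\ref{Lemma1} in the theorem proof itself), but the logic is identical.

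The one place where you diverge from the paper is the corollary sketch. You propose to read uniqueness off the ``If $u>0$, this minimizer is unique'' clause of Lemma~\ref{LemmaRelax} and then recover $x^*_i=\sqrt{z^*_i}$. The paper instead argues directly from the stationarity condition $x_i=\sigma_i u_i/(2(x^Tx-b)+\sigma_i)$: two non-negative stationary points with different norms lead to a contradiction, and equal norms force the points to coincide. Your route is shorter but leans on a uniqueness assertion in Lemma~\ref{LemmaRelax} whose proof in the paper is somewhat informal (the objective of \ref{P2C} is only positive \emph{semi}-definite in $z$, so uniqueness of $z^*$ is not immediate from strict convexity); the paper's direct argument for the corollary avoids this dependency.
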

\begin{proof}
By Lemma \ref{Lemma3}, all local minimizers in the non-negative orthant of \ref{P1} are local minimizers of \ref{P2C} and by convexity of \ref{P2C}, all local minimizers of \ref{P2C} must have equal objective value. As $f_{P_{2C}}(x,z) = f_{P_{1}}(x)$ whenever $x\geq 0$ and $z_i = x_i^2$, it follows that all local minimizers in the non-negative orthant of \ref{P1} will also have the same objective function value and thus will be global minimizers of \ref{P1}. Finally, by \ref{Lemma2}, any minimizer outside of the non-negative orthant will have the same objective value as at least one minimizer in the non-negative orthant, which extends the status of global minimizers to all local minima of \ref{P1}.  
\end{proof}
\begin{corollary}
If $u>0$, the minimizer is unique.
\label{Corollary1}

\end{corollary}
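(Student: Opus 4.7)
The proof is essentially a direct corollary of the machinery already built. The plan is to chain together Lemma \ref{Lemma1}, Lemma \ref{Lemma3}, and Lemma \ref{LemmaRelax} to pull the uniqueness of the P2C minimizer back to a uniqueness statement for P1.

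First I would invoke Lemma \ref{Lemma1} to restrict attention to the non-negative orthant: since $u>0$, every minimizer $x^*$ of \ref{P1} satisfies $x^*\geq 0$. Next I would apply Lemma \ref{Lemma3} to lift any such minimizer to a minimizer of \ref{P2C} via the canonical pairing $z^*_i = (x^*_i)^2$. Finally, Lemma \ref{LemmaRelax} asserts that, under the hypothesis $u>0$, the minimizer of \ref{P2C} is unique. So if $x^*$ and $\tilde{x}^*$ were two distinct minimizers of \ref{P1}, their lifts $[x^*, (x^*)^{\circ 2}]$ and $[\tilde{x}^*, (\tilde{x}^*)^{\circ 2}]$ would be two distinct minimizers of \ref{P2C}, contradicting Lemma \ref{LemmaRelax}. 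Hence the minimizer of \ref{P1} is unique.

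There is no real obstacle here: every ingredient is already in place and the argument is a one-line chase through the previously established correspondence between \ref{P1} and \ref{P2C}. The only thing to be a bit careful about is confirming that the lift $x \mapsto [x, x^{\circ 2}]$ is injective on the non-negative orthant, which is immediate since $x\geq 0$ can be recovered as the entrywise square root of $z$. Thus distinct non-negative minimizers of \ref{P1} would necessarily produce distinct minimizers of \ref{P2C}, and the uniqueness of the latter forces the uniqueness of the former.
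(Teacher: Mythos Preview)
Your proof is correct, and it takes a genuinely different route from the paper's own argument. The paper does \emph{not} invoke Lemma~\ref{LemmaRelax} here; instead it works directly with the stationarity condition of \ref{P1}, writing
\[
x_i^* \;=\; \frac{\sigma_i u_i}{2(\|x^*\|_2^2 - b) + \sigma_i},
\]
and argues by a norm-comparison contradiction: if $\hat{x}\neq\tilde{x}$ were two non-negative stationary points with $\|\hat{x}\|_2^2 > \|\tilde{x}\|_2^2$, the formula forces $\hat{x}_i < \tilde{x}_i$ for every $i$, hence $\|\hat{x}\|_2^2 < \|\tilde{x}\|_2^2$. This actually yields the slightly stronger conclusion that the non-negative \emph{stationary point} of \ref{P1} is unique, not merely the minimizer. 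Your approach, by contrast, simply cashes in the uniqueness clause of Lemma~\ref{LemmaRelax} via the lift of Lemma~\ref{Lemma3}; it is shorter and makes cleaner use of the convex-relaxation machinery already assembled, at the cost of leaning on the rather tersely justified uniqueness statement in Lemma~\ref{LemmaRelax}. One trivial remark: injectivity of the lift $x\mapsto [x,\,x^{\circ 2}]$ is immediate because the first block is $x$ itself, so you do not even need the non-negativity restriction for that step.
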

\begin{proof}
We prove this corollary by proving a more general statement, i.e. if $u>0$, there exists only one stationary point in the non-negative orthant. By contradiction, assume that there exist two different points $\hat{x}$ and  $\tilde{x}$, $\hat{x}\neq \tilde{x}$, such that they both are stationary points of \ref{P1}, i.e. $\nabla f_{P_{1}}(\hat{x}) = 0$  and $\nabla f_{P_{1}}(\tilde{x}) = 0$. Assume without loss of generality that $||\hat{x}||_2^2 \geq ||\tilde{x}||_2^2$. The stationarity condition gives us the following characterization of $\hat{x}$ and $\tilde{x}$:
\begin{equation}
    \hat{x}_i = \frac{\sigma_i u_i}{\left(2({\hat{x}}^T {\hat{x}} - b) + \sigma_i\right)},\quad \tilde{x}_i = \frac{\sigma_i u_i}{\left(2({\tilde{x}}^T {\tilde{x}} - b) + \sigma_i\right)},\quad \forall i
\end{equation}
Given the non-negativity of the left-hand sides of these expressions and the positivity of the numerators of the right-hand sides, it follows that the denominators can't be negative. The condition $||\hat{x}||_2^2 \geq ||\tilde{x}||_2^2$ can be broken into the conditions: $||\hat{x}||_2^2 = ||\tilde{x}||_2^2$ and $||\hat{x}||_2^2 > ||\tilde{x}||_2^2$. For the former, we note that if $||\hat{x}||_2^2 = ||\tilde{x}||_2^2$, then the right-hand sides of the expressions above are equal, and thus we have that $\hat{x} = \tilde{x}$, which contradicts the initial assumption that $\hat{x}$ and $\tilde{x}$ are different. For the latter, we have that:
\begin{equation}
     ||\hat{x}||_2^2 > ||\tilde{x}||_2^2 \implies \tilde{x}_i = \frac{\sigma_i u_i}{\left(2({\tilde{x}^T} {\tilde{x}} - b) + \sigma_i\right)}>  
     \frac{\sigma_i u_i}{\left(2({\hat{x}}^T {\hat{x}} - b) + \sigma_i\right)} =  \hat{x}_i \implies ||\hat{x}||_2^2 < ||\tilde{x}||_2^2
\end{equation}
which is a contradiction. It follows thus that $\hat{x}$ and  $\tilde{x}$ cannot be stationary points unless they are equal, which proves that uniqueness of the non-negative stationary point. Given that, by Lemma 1, when $u>0$ all minimizers of \ref{P1} are in the non-negative orthant, and that \ref{P1} must have a minimizer because it is bounded below, at least one minimizer must exist in the non-negative orthant. As every minimizer is a stationary point and that stationary point is unique, it follows that the minimizer of \ref{P1} must also be unique. 
\end{proof}

\section{An Efficient Solver for the Prox Operator}\label{Section:Algo}

The equivalence of a all local minimizers allows us to use simple descent methods to reach the global solution of \ref{P1}. Between the three equivalent problems \ref{P1}, \ref{P2} and \ref{P2C}, we focus on \ref{P1} due to its simplicity, as it lacks any constraints and the size of its variable space is $N$ as opposed to $2N$. We begin by computing the gradient $\nabla_{x}$ and the Hessian $\nabla^2_{x}$ of \ref{P1}:
\begin{equation}
    \nabla_{x} \; = \; \left(4({x}^T {x} - b)\; I\right){x} + 2\Sigma \left(x-u\right)
    \label{Gradient}
\end{equation}
\begin{equation}
    \nabla^2_{x} \; = \; 8x x^T + 4({x}^T {x} - b)\; I + 2\Sigma 
    \label{Hessian}
\end{equation}
The Hessian matrix in \ref{Hessian} can be separated into a (full-rank) diagonal component and a rank 1 matrix. Under this structure, the inverse of $\nabla^2_{x}$ can be computed in linear time using the Sherman-Morrison formula to perform rank 1 updates on the inverse of matrix \cite{Golub2012}:
\begin{equation}
    \left(A + u v^T \right)^{-1} = A^{-1} - \frac{A^{-1} u v^T A^{-1}}{1+v^TA^{-1}u} 
    \label{S-M} \tag{S-M}
\end{equation}
This property allows us to add second-order information in our optimization at the same cost as a first-order gradient descent. We propose to minimize \ref{P1} using the Newton method together with the \ref{S-M} update:
\begin{equation}
    \begin{split}
        x^{k+1} &= x^{k} - \alpha \left(\nabla^2_{x^{k}}\right)^{-1} \nabla_{x^{k}}\\
        &= x^{k} - \alpha \left(8x^{k} {x^{k}}^T + 4({x^{k}}^T {x^{k}} - b)\; I + 2\Sigma \right)^{-1} \nabla_{x^{k}}\\
    \end{split}
\end{equation}
Introducing the vector $\xi^{k} = 2\sigma + 4({x^{k}}^T {x^{k}} - b)\mathbf{1}$ containing the diagonal elements of $\nabla^2_{x^K}$, the element-wise quotient operator $\oslash$ and the mapping $diag(x)$ that maps from $R^N$ vectors to $R^{N\times N}$ diagonal matrices, we get that:
\begin{equation}
    \begin{split}
        x^{k+1} &= x^{k} - \alpha \left(diag\left(\mathbf{1}\oslash\xi^k\right) - 8\frac{diag\left(\mathbf{1}\oslash\xi^k\right) x^{k} {x^{k}}^T diag\left(\mathbf{1}\oslash\xi^k\right)}{1+{x^{k}}^Tdiag\left(\mathbf{1}\oslash\xi^k\right){x^{k}}}    \right) \nabla_{x^{k}}\\
        &= x^{k} - \alpha \left(\nabla_{x^{k}}\oslash\xi^k -   \frac{\left(8{x^k}^T\left(\nabla_{x^{k}}\oslash\xi^k\right)\right)}{1+8{x^{k}}^T \left(x^k\oslash\xi^k\right)}  \left({x^k}\oslash\xi^k\right)  \right)
    \end{split}
\end{equation}
which involves only element-wise vector computations and can be computed in linear time.\\
\\
Next we explore two additional factors of the optimization of \ref{P1}: the choice of the starting iterate $x^0$ and the size of the descent direction $\alpha^k$. For the choice of the first iterate, we study the structure of the optimizer $x^*$ and develop element-wise and norm bounds for $x^*$ that we can apply to $x^0$. We start by setting the gradient in Equation \ref{Gradient} to 0:

%Two additional factors that effect the efficiency of the Newton method are the  choice of the initial iterate $x^0$ and the size of the Newton update $\alpha$. It turns out that we can derive a series of bounds on the optimizer $x^*$ that can be used to infer a good initial point $x^0$.\\
%\\

%\subsection{Norm and value bounds on the optimizer $x^*$}\label{Section:bounds}

%Analyzing the structure of the optimizer ${x^{*}}$ we can derive upper and lower bounds on both the squared norm and the values of $x$. These bounds hold generally for any $b>0$, $u>0$ and $\Sigma\succeq 0$, but tighter bounds could be derived taking into account the distribution of values in these variables in this analysis. Setting the  we get:
\begin{equation}
     \left(4({x^{*}}^T {x^{*}} - b)\; I\right){x^{*}} = 2\Sigma \left(u-x^{*}\right)
     \label{GradientOpti}
\end{equation}
We assume from now on that ${x^{*}}\geq 0$. As shown in Section \ref{Section:Proof}, either there is a unique optimizer for \ref{P1}, which is non-negative, or there are multiple equivalent minimizers, for which one of them is non-negative, so we focus our discussion on norm and element bounds on the non-negative case. Under that assumption, all the elements on the left hand side (LHS) of \ref{GradientOpti} share the same sign as ${x^{*}}^T {x^{*}} - b$. Assuming the LHS to be positive, i.e. ${x^{*}}^T {x^{*}} > b$, leads to $u>x^*$, which in turn leads to $u^T u > {x^{*}}^T {x^{*}} > b$. Similarly, when the LHS is negative we get $b > {x^{*}}^T {x^{*}} > u^T u$ and $x^*>u$. We see that these conditions rely on the relationship of $b$ and $u$, leading to:
\begin{equation}
    \begin{split}
        \text{If}\;u^T u>b\quad & \implies \quad u^T u>{x^{*}}^T{x^{*}}>b\;\text{and}\; u_i>x^{*}_i\\
        \text{If}\;b>u^T u\quad & \implies \quad b>{x^{*}}^T{x^{*}}>u^T u\;\text{and}\; x^{*}_i>u_i\\
    \end{split}
    \label{Bounds1}
\end{equation}
Analyzing equation \ref{GradientOpti} element-wise, we get:
\begin{equation}
    x_i^* = \frac{\sigma_i\;u_i}{\sigma_i + 2\left({x^{*}}^T{x^{*}}-b\right)}
    \label{x_opti}
\end{equation}
As the LHS has been shown to be non-negative and the numerator in the RHS is non-negative by construction, it follows that the RHS denominator must be non-negative too. This results in the following lower bound on the norm of ${x^{*}}$:
\begin{equation}
        \sigma_i + 2\left({x^{*}}^T{x^{*}}-b\right) \geq 0\quad \implies \quad
        {x^{*}}^T{x^{*}} \geq b - \frac{\sigma_i}{2} \; \forall i \quad \implies \quad  {x^{*}}^T{x^{*}} \geq b - \frac{\sigma_{-}}{2}
        \label{Bounds2}
\end{equation}
where $\sigma_{-}$ is the minimum diagonal entry of $\Sigma$. This bound is only useful for the case $b>u^T u$, as otherwise Equation \ref{Bounds1} gives a tighter bound ${x^{*}}^T{x^{*}}>b$. Applying this inequality to Equation \ref{x_opti} we derive an element-wise upper bound on $x^*$:
\begin{equation}
    x_i^* = \frac{\sigma_i\;u_i}{\sigma_i + 2\left({x^{*}}^T{x^{*}}-b\right)} \leq \frac{\sigma_i\;u_i}{\sigma_i -\sigma_{-}}
    \label{Bounds3}
\end{equation}
%Finally, when $b>u^T u$, we can combine the norm upper bound and the element-wise lower bounds to get:
%\begin{equation}
%    \sqrt{b-u^Tu+u_i^2}>x_i
%    \label{Bounds4}
%\end{equation}
We summarize these bounds in the following table:
\begin{center}
 \begin{tabular}{|c ||c  | c||} 
 \hline
 Case & Norm Bounds & Element-wise Bounds  \\ [0.5ex] 
 \hline\hline
$u^Tu>b$ & $u^T u>{x^{*}}^T{x^{*}}>b$ &  $u_i>x_i$  \\ 
 \hline
 $b > u^Tu$ & $b>{x^{*}}^T{x^{*}}> \text{max}(u^T u, b - \frac{\sigma_{-}}{2})$ & $\frac{\sigma_i}{\sigma_i -\sigma_{-}}u_i>x_i>u_i$  \\
 %& $b>{x^{*}}^T{x^{*}}>b - \frac{\sigma_{-}}{2}$ & $\sqrt{b-u^Tu+u_i^2}>x_i$\\
 \hline
\end{tabular}
\label{BoundsTable}
\end{center}
Regarding the choice of the step size $\alpha$, we propose an alternative to the trivial choice $\alpha = 1$ that relies on computing the optimal step size in the descent direction at each iteration. We use the fact that \ref{P1} is a quartic problem and thus optimizing it over a line involves the minimization of a univariate quartic polynomial:
\begin{equation}
\begin{split}
    \alpha^*\; &= \; \operatorname*{argmin}_{\alpha \in R}\quad \left( \left({x^k} + \alpha \Delta_{x^k}\right)^T\left({x^k} + \alpha \Delta_{x^k}\right) - b\right)^2 + ||\left({x^k} + \alpha \Delta_{x^k}\right)-u||_{\Sigma}^2\\
    &= \; \operatorname*{argmin}_{\alpha \in R}\quad a_4\,\alpha^4\,+a_3\,\alpha^3\,+a_2\,\alpha^2\,+a_1\,\alpha\,+a_0
    \end{split}
    \label{Eq:OptimalAlpha}
\end{equation}
for appropriate values of $a_{0},\dots,a_4$. The optimal $\alpha$ can be obtained by differentiating the quartic polynomial in \ref{Eq:OptimalAlpha} and keeping the real negative root with the smallest absolute value of the resulting cubic, which can be found via a closed form expression. 
\section{Numerical Experiments}\label{Section:Numerical}
In this section we compare the proposed method to solve \ref{P1} with conventional implementations of the Newton Method and gradient descent. All three methods are implemented with both a unit step update and the optimal step-length update presented in Section \ref{Section:Algo} and are tested with a random initialization as well as with a warm start using the bounds derived in Section \ref{Section:Algo}. Pseudo-code for the tested methods is shown in Algorithm \ref{Algo1}:\\
\\
\begin{algorithm}[H]
\caption{Pseudo-code for the tested methods}\label{Algo1}
\begin{algorithmic}[1]
 \STATE{Sample $u$, $\Sigma$ and $x^0$ as per Algorithm \ref{Algo2}}\\
 \IF{Warm Start}
 \STATE{$x^0 = u \,\sqrt{\frac{b}{u^T u}}$}
 \ENDIF
 \STATE{ $\text{tol} = 10^{-6}$}
 \STATE{$\text{max-iter} = 5\cdot10^{4}$}
 \STATE{$k = 0$}
\WHILE{$\left(\nabla_{x^k}^T \nabla_{x^k}\right)>\text{tol} \quad  \&\&  \quad \text{k}<\text{max-iter}$}
\IF{Gradient Descent}
       \STATE{     $\Delta_{x^k}= -\nabla_{x^k}$
        }
        \ELSIF{Newton}
        \STATE{
            $ \Delta_{x^k} = -\left(\nabla_{x^k}^2\right)^{-1}\nabla_{x^k}$
        }
        \ELSIF{S-M Newton}
        \STATE{
            $\xi^{k} = 2\sigma + 4({x^{k}}^T {x^{k}} - b)\mathbf{1}$}
            \STATE{
            $\Delta_{x^k} = - \left(\nabla_{x^{k}}\oslash\xi^k -   \frac{\left(8{x^k}^T\left(\nabla_{x^{k}}\oslash\xi^k\right)\right)}{1+8{x^{k}}^T \left(x^k\oslash\xi^k\right)}  \left({x^k}\oslash\xi^k\right)  \right)$
        }
    \ENDIF
    
    \IF{Unit Step}
    \STATE{
             $\alpha = 1$
        }
        \ELSIF{Optimal Step}
        \STATE{
            $ \alpha = \operatorname*{min}_{\alpha} \;  f\left({x^k} + \alpha \Delta_{x^k}\right)\quad$ 
            for $f(x) = \left(x^T x - b\right)^2 + \frac{\rho}{2}(x-{u})^T\Sigma(x-{u})$
        }
    \ENDIF
   \STATE{ $ x^{k+1} = x^{k} + \alpha \Delta_{x^k}$}
    \STATE{$k = k+1$}
 \ENDWHILE

\end{algorithmic}
\end{algorithm}

The methods were tested for 20 different values of problem dimension $N$, ranging from $N=10$ to $2000$, spaced in logarithmically uniform intervals. For each $N$, the methods were tested for 50 Monte Carlo simulations with different sampled values of $u$, $\Sigma$ and $x^0$.\\ 
\\
The geometry of \ref{P1} depends, among other things, on the relationship between $b$ and $||u||_2^2$ and the condition number and Frobenius norm of $\Sigma$. To analyze the sensitivity of the proposed method to these factors, $u$ was sampled from a uniform distribution and scaled to having a squared norm ranging from 1/10 to 10 times that of $b$, while  $\Sigma$ was sampled to have linearly decaying eigenvalues of multiplicity 2, with a squared Frobenius norm ranging from  1/10 to 10 times that of $b$ and a condition number ranging from $1$ to $1000$. Setting $b = 100$, we get the following sampling scheme:\\ 
\\
\begin{algorithm}[H]
\begin{algorithmic}
 \STATE{$p\,\sim\,\mathcal{U}\left(0,3\right),\,q,r_1,r_2\,\sim\,\mathcal{U}\left(1,3\right)$}
 \STATE{$s_1,s_2\,\sim\,\mathcal{U}\left(0,1\right)^N$}
 \STATE{$b = 100$}
 \STATE{$t = \{ t\in R^\frac{N}{2}:\, t_i = 1 + \frac{i-1}{\frac{N}{2}-1}10^p \quad \forall \,i\in [1,\frac{N}{2}]\} $}
 \STATE{$\sigma = [t,t]\,\sqrt{\frac{10^q}{||[t,t]||_2^2}}$}
 \STATE{$\Sigma = \text{diag}\left(\sigma\right)$}
 \STATE{$u = s_1\,\sqrt{\frac{10^{r_1}}{||s_1||_2^2}}$}
 \STATE{$x^0 = s_2\,\sqrt{\frac{10^{r_2}}{||s_2||_2^2}}$}
 \end{algorithmic}
\caption{Pseudo-code for Monte Carlo sampling}\label{Algo2}
\end{algorithm}

\begin{figure}
    \centering
    \includegraphics[width = 12cm]{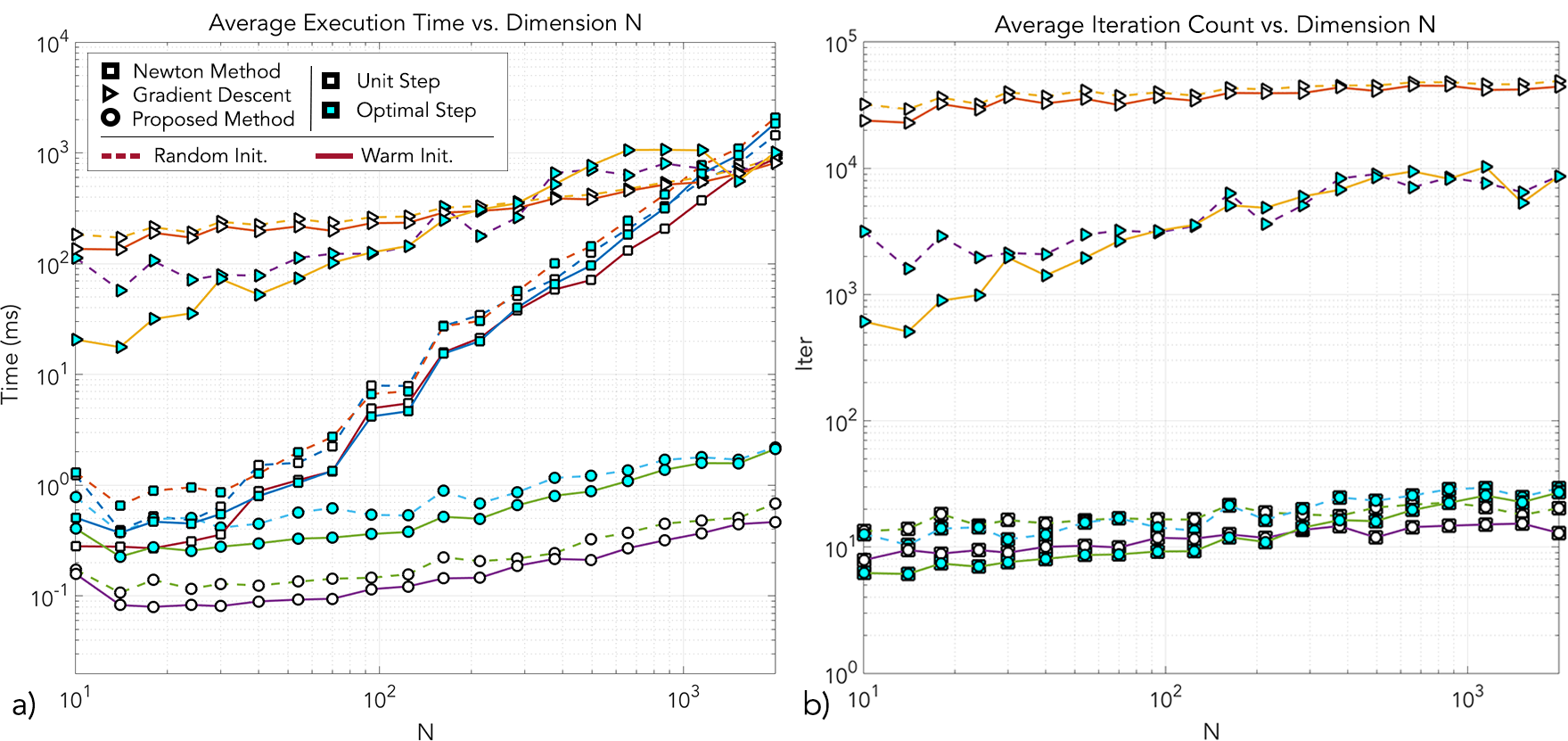}
    \caption{Average execution time and iteration count vs. dimension $N$ across methods: Each data point is the average of 50 Monte Carlo Simulations as explained above. The different methods are represented by the marker shapes (square, triangle and circle for Newton Method, Gradient Descent and the proposed S-M Newton method, respectively), while the step-size is shown in the color of the marker (white for unit step, cyan for optimal step). Finally, dotted lines show experiments run with a random initial point, while solid lines denote those initialized with a warm start, as explained in the text. }
    \label{fig:Results1}
\end{figure}

The results of the Monte Carlo simulations are presented in Figure \ref{fig:Results1}, where execution time and iteration count are shown as a function of the problem dimension $N$ in a log-log plot. The proposed approach, in circular markers, outperforms both gradient descent and standard Newton in all modalities. The computational complexity of the compared approaches becomes evident in Figure \ref{fig:Results1}.a, where the Newton method (in square markers) presents a much steeper increase in complexity due to the $O(N^3)$ cost of the Hessian inversion, compared to the linear complexity of the other two methods.\\
\\
The use of warm start, denoted by the plots in solid lines in Figure \ref{fig:Results1}, produces both lower execution time and iteration count in all cases when compared to the random start, in dotted lines. Regarding the choice of step size, the use of the optimal step size reduces the number of iterations in almost all cases, as shown in Figure \ref{fig:Results1}.b. However, the effect on execution time varies between methods: for the proposed method, the use of a unit step achieves a faster converge than the optimal step, due to the extra computation present in the latter approach, while for the Newton method, where each iterate requires a costly matrix inversion, the extra cost of computing the optimal step is negligible in comparison and the Newton method converges more rapidly to the optimizer when using the optimal step size.\\
\\
The difference in convergence rates between gradient descent and the Newton methods are evident in Figure \ref{fig:Results1}.b. In fact, most of the executions for the gradient descent experiments (90\% and 60\% for random and warm start, respectively, and $N=2000$) stopped due to the iteration limit set in Algorithm \ref{Algo1}, before the tolerance was reached, as can be seen by the progressive approach of both gradient descent curves in Figure \ref{fig:Results1}.b towards the 50000 iteration ceiling,  further illustrating the slow convergence rate of gradient descent for this problem.\\

\begin{figure}
    \centering
    \includegraphics[width = 14cm]{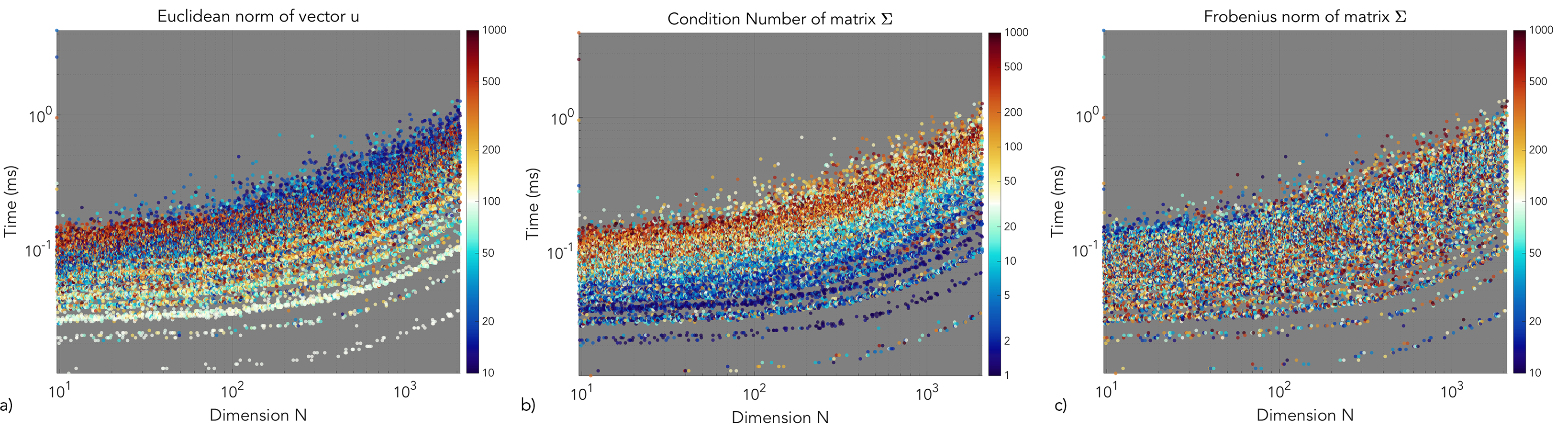}
    \caption{Effect of $||u||_2^2$, $\text{cond}(\Sigma)$ and $||\Sigma||_F^2$ on the running time of the proposed method. 50 values of $N$ were selected, spaced in log-uniform intervals between $N=10$ and $N=2000$. 500 experiments were carried out for each $N$, sampling $u$ and $\Sigma$ at random as in Algorithm \ref{Algo2}. The proposed method was run for each experiment, using the unit step and the warm initialization proposed in \ref{Algo1}. Each experiment is plotted with respect to its dimension $N$ and its running time, where incremental displacements where added on the horizontal axis to improve visualization. Colors and colorbars indicate  $||u||_2^2$, $\text{cond}(\Sigma)$ and $||\Sigma||_F^2$ in panels a, b and c, respectively. }
    \label{fig:Results2}
\end{figure}

In Figure \ref{fig:Results2} we report on the effect on  execution time of the problem parameters $||u||_2^2$, $\text{cond}(\Sigma)$ and $||\Sigma||_F^2$. To do so, we  discretized the range $N = [10,2000]$  in 50 log-uniform steps and ran 500 experiments for each value of $N$, sampling $u$ and $\Sigma$ at random as in Algorithm \ref{Algo2}. All experiments were run with the fastest configuration found in the previous experiments, i.e. using the warm start presented in Algorithm \ref{Algo1} and unit step size. Figure \ref{fig:Results2} shows the execution time of each experiment as a function of dimension $N$, where each experiment is colored based on $||u||_2^2$, $\text{cond}(\Sigma)$ and $||\Sigma||_F^2$, respectively, in each panel.\\
\\
The execution time has a clear dependence on $||u||_2^2$, decreasing as $||u||_2^2$ approaches $b$. As shown in the bounds developed in Section \ref{Section:Algo}, $||x^*||_2^2$ is always bounded between $b$ and $||u||_2^2$, so it is to be expected that $x^0$ converges faster to $x^*$ as these bounds become tighter. The condition number also shows a clear correlation with the execution time, where  convergence is faster as $\Sigma$ approaches a multiple of the identity matrix (we note that \ref{P1} can be solved in closed form when $\Sigma = c I$ \cite{Soulez2017}). However, as evidenced by Figure \ref{fig:Results2}.c, no clear patterns appear in the interaction between $||\Sigma||_F^2$ and execution time. 

\section{Conclusions} \label{Section:Conclusions}
In this paper we have studied the general form of the prox operator \cite{Combettes2011} that arises from the multispectral phase retrieval problem \ref{orig_problem}. We have shown that all minimizers of the prox operator \ref{prox_f} are equivalent in objective value, guaranteeing the global optimality of any local minima reached by simple descent techniques despite the non-convexity of the problem. We then analyzed the structure of the Hessian of the prox operator and shown that it can be inverted in linear time due to its diagonal plus rank 1 structure. Thus, an exact Newton method can be implemented that is efficient both in the number of iterates needed to achieve optimality and in the cost of each of these iterates. We also studied the properties of the minimizer of \ref{prox_f} in order to derive initial iterates that are close to the optimal solution. We report on the performance of the proposed method and compare it to that for regular implementations of the gradient descent and Newton methods, and show that the proposed approach achieved superior performance in all experimental scenarios studied. 
\\

\section*{Acknowledgments}
We would like to thank Mario Sznaier for helpful discussions. This work was supported by the U.S. Department of Energy, Office of Science, Basic Energy Sciences (grant no. DE–SC0000997).

\bibliographystyle{siamplain}
%\bibliography{references}
\bibliography{bibliography.bib}

\begin{thebibliography}{10}

\bibitem{Boyd2010}
{\sc S.~Boyd, N.~Parikh, E.~Chu, B.~Peleato, and J.~Eckstein}, {\em
  {Distributed Optimization and Statistical Learning via the Alternating
  Direction Method of Multipliers}}, Foundations and Trends{\textregistered} in
  Machine Learning, 3 (2010), pp.~1--122,
  \url{https://doi.org/10.1561/2200000016}.

\bibitem{Candes2014}
{\sc E.~Candes, X.~Li, and M.~Soltanolkotabi}, {\em {Phase Retrieval via
  Wirtinger Flow: Theory and Algorithms}},  (2014),
  \url{https://doi.org/10.1109/TIT.2015.2399924},
  \url{http://arxiv.org/abs/1407.1065
  http://dx.doi.org/10.1109/TIT.2015.2399924},
  \url{https://arxiv.org/abs/1407.1065}.

\bibitem{Candes2011}
{\sc E.~J. Candes, Y.~Eldar, T.~Strohmer, and V.~Voroninski}, {\em {Phase
  Retrieval via Matrix Completion}},  (2011),
  \url{http://arxiv.org/abs/1109.0573}, \url{https://arxiv.org/abs/1109.0573}.

\bibitem{Candes2013}
{\sc E.~J. Cand{\`{e}}s, T.~Strohmer, and V.~Voroninski}, {\em {PhaseLift:
  Exact and stable signal recovery from magnitude measurements via convex
  programming}}, Communications on Pure and Applied Mathematics, 66 (2013),
  pp.~1241--1274, \url{https://doi.org/10.1002/cpa.21432},
  \url{https://arxiv.org/abs/1109.4499}.

\bibitem{Chen2017}
{\sc Y.~Chen and E.~J. Cand{\`{e}}s}, {\em {Solving Random Quadratic Systems of
  Equations Is Nearly as Easy as Solving Linear Systems}}, Communications on
  Pure and Applied Mathematics, 70 (2017), pp.~822--883,
  \url{https://doi.org/10.1002/cpa.21638},
  \url{https://arxiv.org/abs/1505.05114}.

\bibitem{Combettes2011}
{\sc P.~L. Combettes and J.~C. Pesquet}, {\em {Proximal splitting methods in
  signal processing}}, Springer Optimization and Its Applications, 49 (2011),
  pp.~185--212, \url{https://doi.org/10.1007/978-1-4419-9569-8_10},
  \url{https://arxiv.org/abs/0912.3522}.

\bibitem{Fienup1978}
{\sc J.~R. Fienup}, {\em {Reconstruction of an object from the modulus of its
  Fourier transform}}, Optics Letters, 3 (1978), p.~27,
  \url{https://doi.org/10.1364/OL.3.000027},
  \url{https://www.osapublishing.org/abstract.cfm?URI=ol-3-1-27},
  \url{https://arxiv.org/abs/78}.

\bibitem{Fienup1982}
{\sc J.~R. Fienup}, {\em {Phase retrieval algorithms: a comparison}},
  Appl.{\~{}}Opt., 21 (1982), pp.~2758�--2769.

\bibitem{Fogel2016}
{\sc F.~Fogel, I.~Waldspurger, and A.~D'Aspremont}, {\em {Phase retrieval for
  imaging problems}}, Mathematical Programming Computation, 8 (2016),
  pp.~311--335, \url{https://doi.org/10.1007/s12532-016-0103-0},
  \url{https://arxiv.org/abs/1304.7735}.

\bibitem{Gerchberg1972}
{\sc R.~W. Gerchberg and W.~O. Saxton}, {\em {A practical algorithm for the
  determination of phase from image and diffraction plane pictures}}, Optik, 35
  (1972), pp.~237--246, \url{https://doi.org/10.1070/QE2009v039n06ABEH013642},
  \url{http://ci.nii.ac.jp/naid/10010556614/}.

\bibitem{Golub1973}
{\sc G.~H. Golub}, {\em {Some Modified Matrix Eigenvalue Problems}}, SIAM
  Review, 15 (1973), pp.~318--334, \url{https://doi.org/10.1137/1015032},
  \url{http://epubs.siam.org/doi/abs/10.1137/1015032}.

\bibitem{Golub2012}
{\sc G.~H. Golub and C.~F. Van~Loan}, {\em Matrix computations}, vol.~3, JHU
  Press, 2012.

\bibitem{Konarev2003}
{\sc P.~V. Konarev, V.~V. Volkov, A.~V. Sokolova, M.~H.~J. Koch, and D.~I.
  Svergun}, {\em {PRIMUS: a Windows PC-based system for small-angle scattering
  data analysis}}, Journal of Applied Crystallography, 36 (2003),
  pp.~1277--1282, \url{https://doi.org/10.1107/S0021889803012779},
  \url{http://scripts.iucr.org/cgi-bin/paper?S0021889803012779}.

\bibitem{Li2012}
{\sc X.~Li and V.~Voroninski}, {\em {Sparse Signal Recovery from Quadratic
  Measurements via Convex Programming}},  (2012), pp.~1--15,
  \url{https://doi.org/10.1137/120893707},
  \url{http://arxiv.org/abs/1209.4785}, \url{https://arxiv.org/abs/1209.4785}.

\bibitem{Parikh2014}
{\sc N.~Parikh and S.~Boyd}, {\em {Proximal Algorithms}}, Foundations and
  Trends{\textregistered} in Optimization, 1 (2014), pp.~127--239,
  \url{https://doi.org/10.1561/2400000003},
  \url{http://www.nowpublishers.com/articles/foundations-and-trends-in-optimization/OPT-003},
  \url{https://arxiv.org/abs/1502.03175}.

\bibitem{Roig-Solvas2017}
{\sc B.~Roig-Solvas and L.~Makowski}, {\em {Calculation of the cross-sectional
  shape of a fibril from equatorial scattering}}, Journal of Structural
  Biology,  (2017), \url{https://doi.org/10.1016/j.jsb.2017.05.003},
  \url{http://linkinghub.elsevier.com/retrieve/pii/S1047847717300837}.

\bibitem{Shechtman2014}
{\sc Y.~Shechtman, A.~Beck, and Y.~C. Eldar}, {\em {GESPAR: Efficient phase
  retrieval of sparse signals}}, IEEE Transactions on Signal Processing, 62
  (2014), pp.~928--938, \url{https://doi.org/10.1109/TSP.2013.2297687},
  \url{https://arxiv.org/abs/1301.1018}.

\bibitem{Shechtman2015}
{\sc Y.~Shechtman, Y.~C. Eldar, O.~Cohen, H.~N. Chapman, J.~Miao, and
  M.~Segev}, {\em {Phase Retrieval with Application to Optical Imaging: A
  contemporary overview}}, IEEE Signal Processing Magazine, 32 (2015),
  pp.~87--109, \url{https://doi.org/10.1109/MSP.2014.2352673},
  \url{https://arxiv.org/abs/1402.7350}.

\bibitem{Soulez2017}
{\sc F.~Soulez, {\'{E}}.~Thi{\'{e}}baut, A.~Schutz, A.~Ferrari, F.~Courbin, and
  M.~Unser}, {\em {Proximity Operators for Phase Retrieval}},  (2017),
  pp.~1--8, \url{https://doi.org/10.1364/AO.55.007412},
  \url{http://arxiv.org/abs/1710.10046 http://dx.doi.org/10.1364/AO.55.007412},
  \url{https://arxiv.org/abs/1710.10046}.

\bibitem{Waldspurger2013}
{\sc I.~Waldspurger, A.~D'Aspremont, and S.~Mallat}, {\em {Phase recovery,
  MaxCut and complex semidefinite programming}}, Mathematical Programming, 149
  (2013), pp.~47--81, \url{https://doi.org/10.1007/s10107-013-0738-9},
  \url{https://arxiv.org/abs/1206.0102}.

\end{thebibliography}
\end{document}